\NeedsTeXFormat{LaTeX2e}[2001/06/01]
\documentclass[12pt,a4paper,leqno]{article}
\usepackage{amsmath,amsthm,amssymb}

\newtheorem{theorem}{Theorem}

\newtheorem{corollary}[theorem]{Corollary}
\theoremstyle{definition}
\newtheorem{definition}[theorem]{Definition}
\newtheorem{example}[theorem]{Example}

\theoremstyle{remark}

\numberwithin{equation}{section}

\begin{document}

\title{Countable Normality.}         
\author{Maha Mohammed Saeed\footnote{ Maha Mohammed Saeed, King Abdulaziz University, Department of Mathematics, P.O.Box 80203, Jeddah 21589, Saudi Arabia, mamosamo@hotmail.com, mmmohammed@kau.edu.sa}    }     
\date{King Abdulaziz University, Department of Mathematics, Jeddah, Saudi Arabia.}
\maketitle

\begin{abstract}

 \rm  A. V. Arhangel'ski\u{i} introduced in 2012, when he was visiting the department of Mathematics at King Abduaziz University,  new weaker versions of normality, called \it $C$-normality, \rm and \it countable normality. \rm    
The purpose of this paper is to investigate countable normality property.   We prove that normality implies countable normality but the converse is not true in general. We present some examples to show relationships between countable normality and other weaker versions of normality such as  $C$-normality, $L$-noramlity,  and mild normality. We answer the following open problem of Arhangel'ski\u{i}:``\it Is there a Tychonoff space which is not $C$-normal ?". \rm
 Throughout this paper, we denote an ordered pair by $\langle x,y\rangle$, the set of positive integers by $\mathbb{N}$ and the set of  real numbers by $\mathbb{R}$. A $T_4$ space is a $T_1$ normal space, a  Tychonoff space is a $T_1$ completely regular space, and a $T_3$ space is a $T_1$ regular space. We do not assume $T_2$ in the definition of compactness and we do not assume regularity in the definition of Lindel\"{o}fness. For a subset $A$ of a space $X$, ${\rm int} A$ and $\overline{A}$ denote the interior and the closure of $A$, respectively. An ordinal $\gamma$ is the set of all ordinals $\alpha$ such that $\alpha<\gamma$. The first infinite ordinal is $\omega_0$, the first uncountable ordinal is $\omega_1$, and the successor cardinal of $\omega_1$ is $\omega_2$.

{\bf Keywords} :normal , countably normal, mildly normal, $C$-normal $L$-normal,  almost normal, quasi-normal, $\pi$-normal, regularly closed.

{\bf Subject code} :54D15, 54C10.

\end{abstract}

\section{Countable Normality.}

\begin{definition} (Arhangel'ski\u{i})\\
A  topological space $X$ is called \it $C$-normal \rm if there exist a normal space $Y$ and a bijective function $f:X\longrightarrow Y$ such that the restriction $f_{|_C}:C\longrightarrow f(C)$ is a homeomorphism for each compact subspace $C\subseteq X$\rm.  A topological space $X$ is called \it countably normal \rm if there exist a normal space $Y$ and a bijective function $f:X\longrightarrow Y$ such that the restriction $f_{|_C}:C\longrightarrow f(C)$ is a homeomorphism for each countable subspace $C\subseteq X$. \rm \end{definition}

\bigskip

$C$-normality was studied in \cite{KZ}. There is a Tychonoff $C$-normal space which is not countably normal, see the paragraph after Theorem \ref{T09} below.
A  topological space $X$ is called \it $L$-normal \rm \cite{KM} if there exist a normal space $Y$ and a bijective function $f:X\longrightarrow Y$ such that the restriction $f_{|_C}:C\longrightarrow f(C)$ is a homeomorphism for each Lindel\"{o}f subspace $C\subseteq X$.  \rm Since any countable space is Lindel\"{o}f, then any $L$-normal space is countably normal. The converse is not true in general. For example the countable complement topology on an uncountable set $X$ \cite{Steen} is countably normal because any countable subspace of $X$ is a discrete subspace, hence the discrete topology on $X$ and the identity function will witness the countable normality. But it is not $L$-normal because it is Lindel\"{o}f non-normal space. 

If $X$ is countable, then $X$ may not be countably normal, for example, $(\,\mathbb{Q},${\LARGE$\tau$}$_p\,)$, where {\LARGE$\tau$}$_p$ is the particular point topology and $\mathbb{Q}$ is the rationales, $p\in\mathbb{Q}$ \cite{Steen}, because no countable non-normal space is countably normal. It is also obvious that any normal space is countably normal, just by taking $X=Y$ and $f$ to be the identity function. The converse is not true in general. Here is an example of a Tychonoff countably normal space which is neither normal nor locally compact. 

\bigskip

\begin{example}\label{E02}
We modify the Dieudonn\'{e} Plank \cite{Steen} to define a new topological space. Let $$X=((\omega_2+1)\times (\omega_0+1))\setminus \{\langle\omega_2,\omega_0\rangle\}.$$ Write $X=A\cup B\cup N$, where $A=\{\langle\omega_2,n\rangle:n<\omega_0\}$, $B=\{\langle\alpha,\omega_0\rangle:\alpha<\omega_2\}$, and $N=\{\langle\alpha,n\rangle:\alpha<\omega_2$ and $n<\omega_0\}$. The topology {\Large$\tau$} on $X$ is generated by the following neighborhood system: For each $\langle\alpha,n\rangle\in N$, let ${\mathcal B}(\langle\alpha,n\rangle)=\{\{\langle\alpha,n\rangle\}\}$. For each $\langle \omega_2,n\rangle\in A$, let ${\mathcal B}(\langle \omega_2,n\rangle)=\{V_\alpha(n)=(\alpha,\omega_2]\times\{n\}:\alpha<\omega_2\}$. For each $\langle\alpha,\omega_0\rangle\in B$, let ${\mathcal B}(\langle \alpha,\omega_0\rangle)=\{V_n(\alpha)=\{\alpha\}\times (n,\omega_0]:n<\omega_0\}$. Then $X$ is Tychonoff non-normal space which is neither locally compact nor locally Lindel\"{o}f as any basic open neighborhood of any element in $A$ is not Lindel\"{o}f, hence not countable \cite{KM}. 

Now, define $Y=X=A\cup B\cup N$. Generate a topology {\Large$\tau$}$^\prime$ on $Y$ by the following neighborhood system: Elements of $B\cup N$ have the same local base as in $X$. For each $\langle \omega_2,n\rangle\in A$, let ${\mathcal B}(\langle \omega_2,n\rangle)=\{\{\langle \omega_2,n\rangle\}\}$. Then $Y$ is $T_4$ space because it is Hausdorff paracompact. 
In \cite{KM}, it was proved that $X$ is $L$-normal. Therefore, the modified Dieudonn\'{e} Plank $X$ is countably normal.$\rule{.1in}{.1in}$
\end{example}

\bigskip

\begin{theorem}
countable normality is a topological property.\end{theorem}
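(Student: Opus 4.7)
The plan is to unpack the definition: start with a countably normal space $X$ witnessed by a normal space $Y$ and a bijection $f : X \to Y$, and suppose $h : X \to Z$ is a homeomorphism. I will exhibit $Y$ together with the composite bijection $g = f \circ h^{-1} : Z \to Y$ as a witness to the countable normality of $Z$.

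The verification will proceed in three short steps. First, I will fix an arbitrary countable subspace $D \subseteq Z$ and set $C = h^{-1}(D) \subseteq X$; since $h$ is a bijection, $C$ is countable, so the hypothesis on $(Y,f)$ gives that $f|_C : C \to f(C)$ is a homeomorphism. Second, I will note that $h^{-1}|_D : D \to C$ is a homeomorphism as the restriction of the global homeomorphism $h^{-1}$ to the subspace $D$ with codomain $C = h^{-1}(D)$. Third, I will observe that $g|_D = (f|_C) \circ (h^{-1}|_D)$ and that the image $g(D) = f(h^{-1}(D)) = f(C)$, so $g|_D : D \to g(D)$ is a composition of two homeomorphisms and is therefore itself a homeomorphism.

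Since $Y$ is normal and $D$ was an arbitrary countable subspace of $Z$, the pair $(Y,g)$ witnesses that $Z$ is countably normal, completing the proof.

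There is no real obstacle here; the argument is a routine diagram chase. The only thing worth being careful about is the bookkeeping of domains and codomains when restricting $h^{-1}$, to make sure one really obtains a homeomorphism onto $C$ rather than merely a continuous injection into $X$. Once that is stated cleanly, the composition step is immediate and the theorem follows.
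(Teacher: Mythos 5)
Your argument is correct and is essentially the same as the paper's: the paper takes a homeomorphism $g:Z\longrightarrow X$ and uses the composite $f\circ g$ as the witness, which is exactly your $f\circ h^{-1}$ with $g=h^{-1}$. You simply spell out the verification that the composite restricts to a homeomorphism on each countable subspace, which the paper leaves implicit.
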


\begin{proof} Let $X$ be any countably normal space and $X\cong Z$. Let $Y$ be a normal space and $f:X\longrightarrow Y$ be a bijective such that $f_{|_C}:C\longrightarrow f(C)$ is a homeomorphism for each countable subspace $C$ of $X$. Let $g:Z\longrightarrow X$ be a homeomorphism. Then  
$f\circ g:Z\longrightarrow Y$ satisfies all requirements.\end{proof}

\bigskip

\begin{theorem}
countable normality is an additive property.\end{theorem}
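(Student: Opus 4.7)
The plan is to read ``additive'' in the standard sense used for $C$-normality and $L$-normality: if $\{X_\alpha\}_{\alpha\in\Lambda}$ is any family of countably normal spaces, then the topological sum $X=\bigoplus_{\alpha\in\Lambda}X_\alpha$ is countably normal. For each $\alpha$, I would pick a witness $(Y_\alpha,f_\alpha)$ for the countable normality of $X_\alpha$, so that $Y_\alpha$ is normal and $f_\alpha\colon X_\alpha\to Y_\alpha$ is a bijection whose restriction to every countable subspace of $X_\alpha$ is a homeomorphism.

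Next, I would set $Y=\bigoplus_{\alpha\in\Lambda}Y_\alpha$ and define $f\colon X\to Y$ by $f|_{X_\alpha}=f_\alpha$. Since each $X_\alpha$ is clopen in $X$ (and similarly for $Y_\alpha$ in $Y$), the map $f$ is a bijection, and $Y$ is normal because the topological sum of normal spaces is normal: any two disjoint closed sets in $Y$ split summand-wise into pairs that can be separated inside each $Y_\alpha$, and the resulting separating opens reassemble to disjoint opens in $Y$.

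To verify the countable-normality condition, let $C\subseteq X$ be any countable subspace, and put $C_\alpha=C\cap X_\alpha$. Only countably many $C_\alpha$ are non-empty, and each is countable. Because $X_\alpha$ is clopen in $X$, $C_\alpha$ is clopen in $C$, so $C=\bigoplus_\alpha C_\alpha$ as a subspace of $X$; likewise $f(C)=\bigoplus_\alpha f_\alpha(C_\alpha)$ as a subspace of $Y$. By the choice of $(Y_\alpha,f_\alpha)$, each $f_\alpha|_{C_\alpha}\colon C_\alpha\to f_\alpha(C_\alpha)$ is a homeomorphism, and since a map between topological sums that is a homeomorphism on every clopen summand is itself a homeomorphism, $f|_C\colon C\to f(C)$ is a homeomorphism. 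This exhibits $X$ as countably normal.

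There is no real obstacle; the content lies entirely in two well-known facts, namely that normality and ``being a topological sum'' both pass to subspaces via clopen decompositions. The one point to be a bit careful with is that I use countability of $C$ only to deduce that each piece $C_\alpha$ is countable (so that the hypothesis on $f_\alpha$ applies)---the number of non-empty summands meeting $C$ is automatically at most countable, but this is not actually needed for the homeomorphism argument, which works summand by summand.
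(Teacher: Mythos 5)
Your proof is correct and follows essentially the same route as the paper's: take the sum of the witnessing normal spaces $Y_\alpha$ and the sum of the witnessing bijections $f_\alpha$, then check that a countable $C\subseteq\bigoplus_\alpha X_\alpha$ decomposes into countable clopen pieces $C\cap X_\alpha$ on each of which the restriction is a homeomorphism. Your added observation that the countability of the index set $\Lambda_0$ is not actually needed for the homeomorphism step is accurate but does not change the argument.
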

\begin{proof} Let $X_\alpha$ be a countably normal space for each $\alpha\in\Lambda$. We show that their sum $\oplus_{\alpha\in\Lambda}X_\alpha$ is countably normal. For each $\alpha\in\Lambda$, pick a normal space $Y_\alpha$ and a bijective function $f_\alpha:X_\alpha \longrightarrow Y_\alpha$ such that $f_{\alpha_{|_{C_\alpha}}}:C_\alpha\longrightarrow f_\alpha(C_\alpha)$ is a homeomorphism for each countable subspace $C_\alpha$ of $X_\alpha$. Since $Y_\alpha$ is normal for each $\alpha\in\Lambda$, then the sum $\oplus_{\alpha\in\Lambda}Y_\alpha$ is normal, [3, 2.2.7]. Consider the function sum, see [3, 2.2.E], $\oplus_{\alpha\in\Lambda}f_\alpha:\oplus_{\alpha\in\Lambda}X_\alpha\longrightarrow\oplus_{\alpha\in\Lambda}Y_\alpha$ defined by $\oplus_{\alpha\in\Lambda}f_\alpha(x)=f_\beta(x)$ if $x\in X_\beta, \beta\in\Lambda$. Now, a subspace $C\subseteq \oplus_{\alpha\in\Lambda}X_\alpha$ is countable if and only if the set $\Lambda_0=\{\alpha\in\Lambda : C\cap X_\alpha\not=\emptyset\}$ is countable and $C\cap X_\alpha$ is countable for each $\alpha\in \Lambda_0$. If $C\subseteq \oplus_{\alpha\in\Lambda}X_\alpha$ is countable, then $(\oplus_{\alpha\in\Lambda}f_\alpha)_{|_C}$ is a homeomorphism because $f_{\alpha_{|_{C\cap X_\alpha}}}$ is a homeomorphism for each $\alpha\in\Lambda_0$.\end{proof}

\bigskip

A function $f:X\longrightarrow Y$ witnessing the countable normality of $X$ need not be continuous as seen in Example \ref{E02}. But it will be if $X$ is of countable tightness. Similar proof as in \cite{KM} will prove the next two theorems.

\bigskip

\begin{theorem}
If $X$ is countably normal and of countable tightness and $f:X\longrightarrow Y$ is a witness of the countable normality of $X$, then $f$ is continuous.
\end{theorem}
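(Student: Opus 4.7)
The plan is to prove continuity of $f$ by showing that $f(\overline{A})\subseteq\overline{f(A)}$ for every $A\subseteq X$, which is the standard closure-preserving characterization of continuity of a (bijective) map. This reduces the continuity question from arbitrary neighbourhoods (hard to control, since $f$ need not respect the topology globally) to the behaviour of $f$ on countable subsets, where the witness hypothesis gives complete information.

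First I would fix $A\subseteq X$ and take an arbitrary $x\in\overline{A}$. Invoking countable tightness of $X$, I pick a countable set $B\subseteq A$ with $x\in\overline{B}$. Then I form the countable set $C=B\cup\{x\}\subseteq X$ and apply the countable normality witness hypothesis: $f_{|_C}\colon C\to f(C)$ is a homeomorphism. Since $x\in\overline{B}$ in $X$ and $x\in C$, we have $x\in\overline{B}\cap C=\mathrm{cl}_{C}(B)$, so by continuity of the restriction $f(x)\in\mathrm{cl}_{f(C)}(f(B))=\overline{f(B)}\cap f(C)\subseteq\overline{f(B)}\subseteq\overline{f(A)}$, where the closures on the right are taken in $Y$. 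This shows $f(x)\in\overline{f(A)}$, so $f(\overline{A})\subseteq\overline{f(A)}$ and $f$ is continuous.

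The only subtle step is the passage between closures in the subspace $C$ (respectively $f(C)$) and the full space $X$ (respectively $Y$), but this is routine because $\mathrm{cl}_{C}(B)=\overline{B}\cap C$ and similarly for $f(C)$. I do not expect any serious obstacle; the argument is essentially the same as the corresponding continuity proof for $L$-normality witnesses in \cite{KM}, which is precisely the reference the author invokes.
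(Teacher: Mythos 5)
Your proof is correct and is essentially the argument the paper has in mind: the paper gives no proof of its own but defers to the analogous continuity result for $L$-normality witnesses in the cited reference, whose proof is exactly this reduction via countable tightness to a countable set $C=B\cup\{x\}$ on which the witness restricts to a homeomorphism. The subspace-closure bookkeeping you flag is indeed routine and handled correctly.
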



\bigskip

\begin{theorem}\label{T09}
If $X$ is $T_3$ separable countably normal and of countable tightness, then $X$ is normal \rm ($T_4$). \end{theorem}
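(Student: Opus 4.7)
My plan is to apply the preceding theorem to obtain that the witness $f\colon X\to Y$ of countable normality is continuous. Fix a countable dense $D\subseteq X$; by continuity, $f(D)$ is dense in $Y$, so $Y$ is separable as well.

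Given disjoint closed sets $H,K\subseteq X$, the strategy is to show that $\overline{f(H)}$ and $\overline{f(K)}$ (closures taken in $Y$) are disjoint; granted this, the normality of $Y$ produces disjoint open sets $U,V\subseteq Y$ with $\overline{f(H)}\subseteq U$ and $\overline{f(K)}\subseteq V$, and then $f^{-1}(U),f^{-1}(V)$ are disjoint open subsets of $X$ (open by continuity of $f$) containing $H$ and $K$, establishing normality of $X$.

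To prove the disjointness in $Y$, I would suppose for contradiction that $y\in\overline{f(H)}\cap\overline{f(K)}$ and set $x=f^{-1}(y)$. Since $H\cap K=\emptyset$, without loss of generality $x\notin K$; by $T_3$ there is an open $W$ with $x\in W$ and $\overline{W}\cap K=\emptyset$ (closure in $X$). For every countable $C\subseteq K$, the set $C\cup\{x\}$ is countable, so $f|_{C\cup\{x\}}$ is a homeomorphism onto $f(C)\cup\{y\}$; since $x\notin\overline{C}\subseteq K$, the homeomorphism forces $y\notin\overline{f(C)}$. The desired contradiction must therefore come from exhibiting a single countable $C\subseteq K$ with $y\in\overline{f(C)}$.

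Producing this countable $C$ is the technical crux and the main obstacle, mirroring the $L$-normal argument in \cite{KM}. The natural candidate $C=D\cap K$ succeeds whenever $D\cap K$ is dense in $K$, for then continuity of $f$ gives $f(K)\subseteq\overline{f(D\cap K)}$ and hence $y\in\overline{f(K)}\subseteq\overline{f(D\cap K)}$. In general I expect to split into cases on whether $x$ lies in the $X$-closure of $f^{-1}(\overline{f(K)})$: when it does, countable tightness of $X$ directly supplies the countable witness which transfers to $Y$ via the homeomorphism on countable sets; when it does not, the density of $D$ in a $T_3$-shrunken neighborhood of $x$, together with the homeomorphism on $(D\cap W)\cup\{x\}$ and the normality of $Y$ applied to the closed sets generated by these countable traces, should force a contradiction. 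The difficulty here reflects the fact that $Y$ is not a priori countably tight, so the countable witness for $y\in\overline{f(K)}$ must be extracted by combining every one of the hypotheses $T_3$, separability, countable tightness of $X$, and countable normality.
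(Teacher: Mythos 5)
Your reduction is the right one (the paper itself only says the proof is ``similar to \cite{KM}'', and the intended argument does run exactly as you set it up: continuity of $f$ from the preceding theorem, then showing $\overline{f(H)}\cap\overline{f(K)}=\emptyset$ and pulling back the separation from $Y$). But the proof is not complete: at the declared ``technical crux'' you stop, and the direction you are pushing in is the wrong one. You insist on finding a countable $C\subseteq K$ with $y\in\overline{f(C)}$. That is essentially asking for countable tightness of $Y$ at $y$ relative to $f(K)$, which, as you yourself observe, is not available; and the fallback $C=D\cap K$ fails because a closed set $K$ in a separable space can be nowhere dense and miss $D$ entirely. The concluding case split (``should force a contradiction'') is not an argument.

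The missing idea is to drop the requirement $C\subseteq K$ and go upward instead of downward: you only ever use two properties of $C$, namely $x\notin\overline{C}$ (to run the $C\cup\{x\}$ homeomorphism and conclude $y\notin\overline{f(C)}$) and $y\in\overline{f(C)}$, and the latter follows by continuity from $K\subseteq\overline{C}$, not from $C\subseteq K$. Concretely: since $x\notin K$ and $X$ is $T_3$, choose open $U\ni x$ with $\overline{U}\cap K=\emptyset$, put $V=X\setminus\overline{U}$ and $C=D\cap V$. Then $C$ is countable; $C$ is dense in the open set $V$, so $K\subseteq V\subseteq\overline{V}=\overline{C}$, whence $f(K)\subseteq f(\overline{C})\subseteq\overline{f(C)}$ and $y\in\overline{f(K)}\subseteq\overline{f(C)}$; on the other hand $\overline{C}\subseteq\overline{X\setminus\overline{U}}\subseteq X\setminus U$, so $x\notin\overline{C}$, and the homeomorphism $f_{|_{C\cup\{x\}}}$ gives $y\notin\overline{f(C)}$ --- the contradiction you were after. (This uses separability, $T_3$, continuity of $f$, and the countable-subspace hypothesis exactly once each; countable tightness enters only through the continuity of $f$.) With this substitution your outline becomes a correct proof; in fact the same computation applied to an arbitrary $A\subseteq X$ in place of $K$ shows $f(\overline{A})=\overline{f(A)}$, i.e.\ that $f$ is a homeomorphism, which yields the theorem immediately.
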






We conclude from the above theorem that the Niemytzki plane \cite{Steen} and  a Mr\'{o}wka space $\Psi({\mathcal A})$, where ${\mathcal A}\subset [\omega_0]^{\omega_0}$ is mad \cite{Dou}, are examples of Tychonoff spaces which are not countably normal. Observe that such a Mr\'{o}wka space is an example of a $C$-normal space, being Hausdorff locally compact \cite{KZ}, which is not countably normal.
Countable normality is not multiplicative because, for example, the Sorgenfrey line is $T_4$ but its square is Tychonoff separable first countable space which is not countably normal  because it is not normal. Also, countable normality is not hereditary, take any compactification of the Sorgenfrey line square. We still do not know if countable normality is hereditary with respect to closed subspaces.

\bigskip

Recall that a \it Dowker space \rm is a $T_4$ space whose product with $I$, $I=[0,1]$ with its usual metric, is not normal.
M. E. Rudin used the existence of a Suslin line to obtain a Dowker space which is hereditarily separable and first countable \cite{Rudin1}. Using CH, I. Juh\'{a}sz, K. Kunen, and M. E. Rudin constructed a first countable hereditarily separable real compact Dowker space \cite{JKR}. Weiss constructed a first countable separable locally compact Dowker space whose existence is consistent with MA + $\neg$ CH  \cite{Wei}. By Theorem 1.6, such spaces are consistent examples of Dowker spaces whose product with $I$ are not countably normal.

\bigskip

Since any countable space is separable and of countable tightness and any $T_3$ second countable space is metrizable [3, 4.2.9], we conclude the following theorem.

\bigskip

\begin{theorem}
Every $T_1$ countable countably normal second countable space is metrizable. \end{theorem}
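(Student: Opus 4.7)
The plan is to exploit the fact that, when $X$ is itself countable, the countable-normality hypothesis applied to the subspace $C=X$ forces the witnessing bijection to be a homeomorphism onto a normal space, after which Urysohn's theorem will supply metrizability.

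Concretely, I would let $f:X\to Y$ be the bijection witnessing countable normality, with $Y$ normal. Because $X$ is countable, I may set $C=X$ in the definition; then $f|_X = f$ is a homeomorphism from $X$ onto $f(X)=Y$ (using bijectivity of $f$). Since normality is a topological invariant, $X\cong Y$ is normal. Combined with the $T_1$ hypothesis, this yields that $X$ is $T_4$, and in particular $T_3$. Applying the Urysohn metrization theorem [3, 4.2.9] to the $T_3$ second countable space $X$ then delivers the conclusion that $X$ is metrizable.

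I do not anticipate any real obstacle. The one subtle point worth flagging is that a general witness $f$ of countable normality need not be continuous (as the paper remarks after Example \ref{E02}); however, taking $C=X$ trivializes this, since then $f$ itself is the required homeomorphism. The paper's hint preceding the statement suggests routing through Theorem \ref{T09} (using that any countable space is automatically separable and of countable tightness to upgrade countable normality to normality), but the direct argument above sidesteps that detour entirely, since for countable $X$ the witness already exhibits $X$ as homeomorphic to a normal space.
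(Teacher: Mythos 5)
Your proof is correct, but it takes a genuinely different route from the one the paper indicates. The sentence preceding the theorem shows the author's intended derivation: a countable space is automatically separable and of countable tightness, so Theorem \ref{T09} (``$T_3$ separable countably normal of countable tightness implies $T_4$'') yields normality, and then the Urysohn metrization theorem [3, 4.2.9] finishes. You instead observe that since $X$ is itself a countable subspace of $X$, the witnessing bijection $f$ restricted to $C=X$ is already a homeomorphism of $X$ onto the normal space $Y$, so $X$ is normal outright; with $T_1$ this gives $T_4$, hence $T_3$, and Urysohn applies. Your key step is precisely the fact the paper records earlier in passing (``no countable non-normal space is countably normal''), so it is consistent with the paper's framework, but your route has a concrete advantage: Theorem \ref{T09} carries a $T_3$ hypothesis that is not among the stated hypotheses of the present theorem (only $T_1$ is assumed), so the paper's suggested derivation needs regularity it has not visibly secured, whereas your argument manufactures $T_3$ for free from $T_1$ plus the normality you establish directly. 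In short, your proof is complete, more elementary, and arguably tightens the paper's own sketch.
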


\bigskip

On 2012, Arhangel'ski\u{i} asked the following problem, see \cite{KZ}:``\it Is there a Tychonoff space which is not $C$-normal ?". \rm The answer is positive. Here is an example of a Tychonoff space which is not $C$-normal.

\bigskip

\begin{example}
Let $G=D^{\omega_1}$, where $D=\{0, 1\}$ with the discrete topology. Let $H$ be the subspace of $G$ consisting of all points of $G$ with at most countably many non-zero coordinates. Put $X=G\times H$. Raushan Buzyakova proved that $X$ cannot be mapped onto a normal space $Y$ by a bijective continuous function \cite{Bu}. \rm

{\bf Claim:} $H$ is Fr\'{e}chet.

Proof of Claim:
Let $A\subseteq H$ be arbitrary and pick any $x\in\overline{A}$. Let $E\subset\omega_1$ be the set of all coordinates $\alpha$ such that $x_{|_\alpha}=1$. By the definition of $H$, $E$ is countable. Write $E=\{\alpha_n : n<\omega_0\}$. Consider the basic open neighborhood $$U=(\prod_{\alpha\in\omega_1\setminus\{\alpha_0\}}D)\times \{1\})\bigcap H$$ of $x$. We have $U\cap A\not=\emptyset$. Pick $x_0\in U\cap A$. Thus $x_{0_{|_{\alpha_0}}}=1$. For $n>0$, consider the open neighborhood $$U=((\prod_{\alpha\in\omega_1\setminus\{\alpha_0, ... , \alpha_{n}\}}D)\times (\prod_{\{\alpha_0, ... , \alpha_{n}\}}\{1\}))\bigcap H$$ of $x$. We have $U\cap A\not=\emptyset$. Pick $x_n\in U\cap A$. Thus $x_{n_{|_{\alpha_i}}}=1$ for all $i\in\{0, 1, ..., n\}$. Now, $x_n\in A$ for each $n\in\omega_0$ and the sequence $x_n$ converges to $x$ because if $V$ is any basic open neighborhood of $x$, then $$V=((\prod_{\alpha\in\omega_1\setminus F}D)\times(\prod_{\alpha\in F_1}\{1\})\times (\prod_{\alpha\in F_2}\{0\}))\cap H$$ where $F=F_1\cup F_2$ is finite with $F_1\cap F_2=\emptyset$. If $E\cap F=\emptyset$, then $x_n\in V$ for all $n<\omega_0$. If $E\cap F\not=\emptyset$, let $k={\rm max}\{i\in\omega_0 : \alpha_i\in E\cap F\}$. Then $x_n\in V$ for each $n>k$. Therefore, $H$ is Fr\'{e}chet.       

Now, $X$ is a $k$-space because $H$ is a $k$-space being a Hausdorff Fr\'{e}chet space, [3, 3.3.20]. Since $G$ is $T_2$ compact, then $X=G\times H$ is a $k$-space, [3, 3.3.27].    
Now, It is clear that $X$ is Tychonoff. Suppose that $X$ is $C$-normal. Let $Y$ be a normal space and $f:X\longrightarrow Y$ be a bijection function such that $f_{|_C}:C\longrightarrow f(C)$ is a homeomorphism for each compact subset $C\subseteq X$. Since $X$ is a $k$-space, then $f$ is continuous, [3, 3.3.21], and this contradicts the Buzyakova's result. $\rule{.1in}{.1in}$ \end{example}

\bigskip

The above example shows that $C$-normality is not hereditary, just take any compactification of $X$. It also shows that not every $k$-space is $C$-normal. The Niemytzki plane and  a Mr\'{o}wka space $\Psi({\mathcal A})$, where ${\mathcal A}\subset [\omega_0]^{\omega_0}$ is mad, are examples of $C$-normal spaces which are not countably normal. They are $C$-normal being $T_2$ locally compact \cite{KZ}.

\bigskip

\section{Countable Normality and other Properties.}

Now, we study some relationships between countable normality and some other weaker versions of normality. First, we recall some definitions.

\begin{definition}
A subset $A$ of a space $X$ is called \it closed domain \rm \cite{Eng},  called also \it regularly closed, $\kappa$-closed, \rm if $A=\overline{{\rm int}A}$. A space $X$ is called \it mildly normal \rm \cite{SS},  called also \it $\kappa$-normal \rm \cite{Sh},  if for any two disjoint closed domains $A$ and $B$ of $X$ there exist two  disjoint open sets $U$ and $V$ of $X$ such that $A\subseteq U$ and $B\subseteq V$, see also  \cite{KS} and \cite{Kal1}. A space $X$ is called \it almost normal \rm \cite{SA} \cite{KH} if for any two disjoint closed subsets $A$ and $B$ of $X$ one of which is closed domain, there exist two disjoint open subsets $U$ and $V$ of $X$ such that $A\subseteq U$ and $B\subseteq V$. A subset $A$ of a space $X$ is called \it $\pi$-closed \rm  \cite{Kal2} if $A$ is a finite intersection of closed domains. A space $X$ is called \it $\pi$-normal \rm  \cite{Kal2} if for any two disjoint closed subsets $A$ and $B$ of $X$ one of which is $\pi$-closed, there exist two disjoint open subsets $U$ and $V$ of $X$ such that $A\subseteq U$ and $B\subseteq V$. A space $X$ is called \it quasi-normal \rm \cite{Za} if for any two disjoint $\pi$-closed subsets $A$ and $B$ of $X$, there exist two disjoint open subsets $U$ and $V$ of $X$ such that $A\subseteq U$ and $B\subseteq V$, see also \cite{Kal2}.\end{definition}

It is clear from the definitions that

\centerline{normal $\Longrightarrow$ $\pi$-normal $\Longrightarrow$ almost normal $\Longrightarrow$ mildly normal.}

\centerline{normal $\Longrightarrow$ $\pi$-normal $\Longrightarrow$ quasi-normal $\Longrightarrow$ mildly normal.}

Now, $(\,\mathbb{Q}\,,$ {\LARGE$\tau$}$_0\,)$, where {\LARGE$\tau$}$_0$ is the particular point topology is not countably normal. But, since the only $\pi$-closed sets are $\emptyset$ and $\mathbb{Q}$, then it is $\pi$-normal, hence quasi-normal, almost normal, and mildly normal. Here is an example of a countably normal space which is not mildly normal, hence neither quasi-normal, almost normal, nor $\pi$-normal.

\bigskip

\begin{example}
The modified Dieudonn\'{e} plank $X$ is countably normal but not mildly normal.\end{example}

\begin{proof} $X$ is not normal because $A$ and $B$ are closed disjoint subsets which cannot be separated by two disjoint open sets. Let $E=\{ n<\omega_0 : n \,\,\mbox{is even}\,\}$ and $O=\{ n<\omega_0 : n\,\, \mbox{is odd}\,\}$.  Let $K$ and $L$ be subsets of $\omega_2$ such that $K\cap L=\emptyset$, $K\cup L=\omega_2$, and the cofinality of $K$ and $L$ are $\omega_2$; for instance, let $K$ be the set of limit ordinals in $\omega_2$ and $L$ be the set of successor ordinals in $\omega_2$.  
Then $K\times E$ and $L\times O$ are both open being subsets of $N$. Define $C=\overline{K\times E}$ and $D=\overline{L\times O}$; then $C$ and $D$ are closed domains in $X$, being closures of open sets, and they are disjoint. Note that $C=\overline{K\times E}=(K\times E)\cup (K\times \{\omega_0\})\cup (\{\omega_2\}\times E)$ and $D=\overline{L\times O}=(L\times O)\cup (L\times \{\omega_0\})\cup (\{\omega_2\}\times O)$. Let $U\subseteq X$ be any open set such that $C\subseteq U$. For each $n\in E$ there exists an $\alpha_n <\omega_2$ such that $V_{\alpha_n}(n)\subseteq U$. Let $\beta=\sup\{\alpha_n : n\in E\}$; then $\beta <\omega_2$. Since $L$ is cofinal in $\omega_2$, then there exists  $\gamma\in L$ such that $\beta < \gamma$ and then any basic open set of $\langle\gamma , \omega_0\rangle\in D$ will meet $U$. Thus $C$ and $D$ cannot be separated.
Therefor, the modified Dieudonn\'{e} plank $X$ is countably normal but is not mildly normal.\end{proof}

\bigskip

\begin{theorem} If $X$ is $C$-normal space such that each countable subspace is contained in a compact subspace, then $X$ is countably normal.\end{theorem}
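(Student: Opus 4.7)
The plan is to reuse the very witness that establishes $C$-normality. Let $Y$ be a normal space and $f:X\longrightarrow Y$ a bijection such that $f_{|_K}:K\longrightarrow f(K)$ is a homeomorphism for every compact $K\subseteq X$. I claim that the same pair $(Y,f)$ witnesses countable normality.

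To verify this, I would take an arbitrary countable subspace $C\subseteq X$ and, by the hypothesis on $X$, fix a compact subspace $K\subseteq X$ with $C\subseteq K$. Since $f_{|_K}:K\longrightarrow f(K)$ is a homeomorphism, its further restriction $(f_{|_K})_{|_C}:C\longrightarrow f(C)$ is a homeomorphism between $C$ (with the subspace topology inherited from $K$) and $f(C)$ (with the subspace topology inherited from $f(K)$). The routine step then is to note that subspace topologies are transitive: the topology $C$ inherits from $K$ is the same as the topology it inherits from $X$, and likewise $f(C)$ inherits the same topology whether one views it as a subspace of $f(K)$ or of $Y$. Hence $f_{|_C}:C\longrightarrow f(C)$ is a homeomorphism as a map between subspaces of $X$ and $Y$, which is exactly what countable normality demands.

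There is essentially no obstacle here beyond the transitivity-of-subspace-topology observation; the hypothesis was crafted to make the argument a direct reduction from the countable case to the compact case. The only care needed in writing it up is to be explicit that $f_{|_C}$ is the same function whether viewed through $K$ or through $X$, so that the homeomorphism property carries over without any hidden change of topology.
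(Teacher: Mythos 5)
Your proposal is correct and follows exactly the paper's own argument: take the witness $(Y,f)$ of $C$-normality, enclose each countable $C$ in a compact $K$, and restrict the homeomorphism $f_{|_K}$ to $C$, using transitivity of subspace topologies. No differences worth noting.
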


\begin{proof} Let $X$ be any $C$-normal space such that if $A$ is any countable subspace of $X$, then there exists a compact subspace $B$ such that $A\subseteq B$. Let $Y$ be a normal space and $f:X\longrightarrow Y$ be a bijective function such that $f_{|_C}:C\longrightarrow f(C)$ is a homeomorphism for each compact subspace $C$ of $X$. Now, let $A$ be any countable subspace of $X$. Pick a compact subspace $B$ of $X$ such that $A\subseteq B$, then $f_{|_B}:B\longrightarrow f(B)$ is a homeomorphism, hence $f_{|_A}:A\longrightarrow f(A)$ is a homeomorphism as $(f_{|_B})_{|_A}=f_{|_A}$.\end{proof}

\bigskip

\begin{corollary} If $X$ is a countably normal space such that each Lindel\"{o}f subspace is countable, then $X$ is $L$-normal.\end{corollary}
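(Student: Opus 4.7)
The plan is to observe that this corollary is the exact Lindelöf analogue of the preceding theorem and follows by the same one-line reindexing argument. The witness of countable normality will automatically serve as a witness of $L$-normality under the stated hypothesis.

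First I would fix the data: let $X$ be countably normal, and let $f:X\longrightarrow Y$ be a bijection onto a normal space $Y$ such that $f_{|_C}$ is a homeomorphism for every countable subspace $C\subseteq X$. I want to show the same $f$ witnesses $L$-normality, i.e.\ that $f_{|_L}$ is a homeomorphism for every Lindel\"of subspace $L\subseteq X$.

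The key point is that the hypothesis ``every Lindel\"of subspace of $X$ is countable'' forces the family of Lindel\"of subspaces of $X$ to be contained in the family of countable subspaces of $X$. (The reverse inclusion is automatic, since every countable space is Lindel\"of, though I do not even need this direction.) So given any Lindel\"of $L\subseteq X$, the hypothesis yields that $L$ is countable, and then the countable normality of $X$ gives that $f_{|_L}:L\longrightarrow f(L)$ is a homeomorphism. Since $Y$ is already normal and $f$ is already a bijection, this is exactly the definition of $L$-normality for $X$.

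There is no real obstacle here; the statement is essentially a reformulation obtained by restricting the class of test subspaces. The only thing to double-check is that the hypothesis is used correctly, namely to reduce an arbitrary Lindel\"of test subspace to a countable one, and that no continuity or extension issue intervenes because the witness $f$ and the target $Y$ are carried over unchanged.
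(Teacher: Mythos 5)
Your argument is correct and is exactly the intended one: the paper states this as an immediate corollary (with no written proof), and the one-line reduction — every Lindel\"of test subspace is countable by hypothesis, so the witness of countable normality already witnesses $L$-normality — is precisely what is meant. Nothing is missing.
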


\bigskip

The next example is an application of the above theorem. It gives a Tychonoff countably normal space which is not almost normal.

\bigskip

\begin{example}
Consider the product space $\omega_1\times(\omega_1+1)$. It is not almost normal because the diagonal $\triangle=\{\langle \alpha , \alpha\rangle :\alpha <\omega_1\,\}$ is a closed domain which is disjoint from the closed set $K=\omega_1\times\{\omega_1\}$ and they cannot be separated by two disjoint open sets, see \cite{KH}. But $\omega_1\times(\omega_1+1)$ is $C$-normal being locally compact and local compactness implies $C$-normality, see \cite{KZ}. Now we characterize all countable subspaces of $\omega_1\times(\omega_1+1)$.

Claim: If a subspace $A$ of $\omega_1\times(\omega_1+1)$ is countable, then there is an $\alpha\in\omega_1$ such that $A\subseteq \alpha\times (\omega_1+1)$.

Proof of claim:  Let $A$ be any countable subspace of $\omega_1\times(\omega_1+1)$. If the condition does not hold, then for any $\alpha\in\omega_1$ there is $x\in A$ and $x$ does not belong to $\alpha\times(\omega_1+1)$. But $\omega_1$ in uncountable, then $A$ is uncountable, a contradiction. So the claim is proved.

We conclude from the above claim that each countable subspace $A$ of $\omega_1\times(\omega_1+1)$ is contained in a compact subspace $B$ of $\omega_1\times(\omega_1+1)$ of the form $B=(\alpha+1)\times (\omega_1+1)$, where $\alpha$ satisfies condition above. Thus, by the Theorem 2.3, $\omega_1\times(\omega_1+1)$ is countably normal. $\rule{.1in}{.1in}$\end{example}

\bigskip

We discovered that the Alexandroff Duplicate space of a countably normal space is countably normal. Recall that the Alexandroff Duplicate space $A(X)$ of a space $X$ is defined as follows: Let $X$ be any topological space. Let $X^\prime=X\times \{1\}$. Note that $X\cap X^\prime=\emptyset$. Let $A(X)=X\cup X^\prime$. For simplicity, for an element $x\in X$, we will denote the element $\langle x,1\rangle$ in $X^\prime$ by $x^\prime$ and for a subset $B\subseteq X$ let $B^\prime=\{x^\prime:x\in B\}=B\times\{1\}\subseteq X^\prime$. For each $x^\prime\in X^\prime$, let ${\mathcal B}(x^\prime)=\{\{x^\prime\}\}$. For each $x\in X$, let ${\mathcal B}(x)=\{U\cup(U^\prime\setminus\{x^\prime\}):U$ is open in $X$ with $x\in U\,\}$. Then ${\mathcal B}=\{{\mathcal B}(x):x\in X\}\cup\{{\mathcal B}(x^\prime):x^\prime\in X^\prime\}$ will generate a unique topology  on $A(X)$ such that $\mathcal B$ is its neighborhood system.  $A(X)$ with this topology is called the \it Alexandroff Duplicate of $X$ \rm \cite{Alex}. 

\bigskip

\begin{theorem}
If $X$ is countably normal, then its Alexandroff Duplicate $A(X)$ is also countably normal.\end{theorem}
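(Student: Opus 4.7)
The plan is to take $A(Y)$ itself as the target normal space, where $Y$ is the normal witness for the countable normality of $X$ provided by a bijection $f\colon X\to Y$. Define $g\colon A(X)\to A(Y)$ by $g(x)=f(x)$ for $x\in X$ and $g(x')=f(x)'$ for $x'\in X'$; this is manifestly a bijection. The task then splits into two parts: (i) verify that $A(Y)$ is normal whenever $Y$ is, and (ii) check that $g$ restricts to a homeomorphism on every countable subspace of $A(X)$.

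For (i), let $E,F$ be disjoint closed subsets of $A(Y)$ and split them along $Y$ and $Y'$ as $E=E_1\cup E_2$ and $F=F_1\cup F_2$. Because every $y'\in Y'$ is isolated, $Y'$ is open and $Y$ is closed in $A(Y)$, so $E_1$ and $F_1$ are disjoint closed in $Y$; use normality of $Y$ to pick disjoint open $U,V\subseteq Y$ with $E_1\subseteq U$ and $F_1\subseteq V$. Put
\[
\tilde U=U\cup E_2\cup(U'\setminus F_2),\qquad \tilde V=V\cup F_2\cup(V'\setminus E_2).
\]
Disjointness of $\tilde U$ and $\tilde V$ falls out from $U\cap V=\emptyset$, $E_2\cap F_2=\emptyset$, and $Y\cap Y'=\emptyset$. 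Openness of $\tilde U$ is the delicate point: for $y\in U$, since $F$ is closed and $y\notin F$, there is a basic neighborhood $W_0\cup(W_0'\setminus\{y'\})$ of $y$ disjoint from $F$, which forces $W_0\cap\pi(F_2)\subseteq\{y\}$ with $\pi\colon Y'\to Y$ the obvious projection; then $W=W_0\cap U$ yields a basic neighborhood $W\cup(W'\setminus\{y'\})\subseteq\tilde U$.

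For (ii), fix a countable $C\subseteq A(X)$, set $C_1=C\cap X$ and $C_2=C\cap X'$, and put $D=C_1\cup\pi(C_2)\subseteq X$, which is countable. By hypothesis $f|_D\colon D\to f(D)$ is a homeomorphism. Basic neighborhoods in $C$ of a point $x\in C_1$ have the form $(U\cap C_1)\cup\{y'\in C_2:y\in U,\ y\neq x\}$ for $U$ open in $X$ with $x\in U$, and the corresponding basic neighborhoods in $g(C)$ of $f(x)$ take the analogous shape over $Y$. The homeomorphism $f|_D$ matches these two families bijectively via lifts of open sets from $D$ to $X$ and from $f(D)$ to $Y$, while points of $C_2$ are isolated on both sides; this gives continuity of $g|_C$ and of its inverse. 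The main obstacle is (i): the naive choice $\tilde U=U\cup U'$ fails because duplicates of points of $U$ can sit inside $F_2$, so the careful bookkeeping $\tilde U=U\cup E_2\cup(U'\setminus F_2)$, together with the local-shrinking argument using closedness of $F$, is what makes openness go through. After that is settled, (ii) becomes a mechanical translation driven by the countable normality of $X$ applied to $D$.
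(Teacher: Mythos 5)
Your proof is correct and follows the same overall route as the paper's: the target space is $A(Y)$, the map is the duplicate $g$ of $f$, and everything reduces to checking $g$ on countable subspaces of $A(X)$. Two points distinguish your write-up. First, you prove from scratch that $A(Y)$ is normal, whereas the paper simply cites this as well known; your argument (separate $E_1,F_1$ inside the closed copy $Y$, then take $\tilde U=U\cup E_2\cup(U'\setminus F_2)$ and use closedness of $F$ to shrink a basic neighborhood of each $y\in U$ so that its primed part misses $F_2$) is a correct proof of that standard fact. Second, and more substantively, you apply the countable-normality hypothesis to $D=(C\cap X)\cup\pi(C\cap X')$ rather than only to $C\cap X$. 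This is the right move: the trace on $C$ of a basic neighborhood $U\cup(U'\setminus\{x'\})$ involves the points $y'\in C\cap X'$ with $y\in U$, and such $y$ need not belong to $C\cap X$; to guarantee that $g(y')=f(y)'$ lands in $W'$ one needs $f$ to behave homeomorphically on a countable set containing $\pi(C\cap X')$. The paper's proof invokes only $f|_{C\cap X}$ at this step and thus glosses over exactly this point, so your choice of $D$ closes a small gap in the published argument while keeping the same overall strategy.
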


\begin{proof} Let $X$ be any  countably normal space. Pick a normal space $Y$ and a bijective function $f:X\longrightarrow Y$ such that $f_{|_C}:C\longrightarrow f(C)$ is a homeomorphism for each countable subspace $C\subseteq X$. Consider the Alexandroff duplicate spaces $A(X)$ and $A(Y)$ of $X$ and $Y$ respectively. It is well-known that the Alexandroff Duplicate of a normal space is normal, hence  $A(Y)$ is also normal. Define $g:A(X)\longrightarrow A(Y)$ by $g(a)=f(a)$ if $a\in X$ and if $a\in X^\prime$, let $b$ be the unique element in $X$ such that $b^\prime =a$, then define $g(a)=(f(b))^\prime$. Then $g$ is a bijective function. Now, a subspace $C\subseteq A(X)$ is countable if and only if $C\cap X$ is countable in $X$ and $C\cap X^\prime$ is countable in $X^\prime$. Let $C\subseteq A(X)$ be any countable subspace. We show $g_{|_C}:C\longrightarrow g(C)$ is a homeomorphism. Let $a\in C$ be arbitrary. If $a\in C\cap X^\prime$, let $b\in X$ be the unique element such that $b^\prime=a$. For the smallest basic open neighborhood $\{(f(b))^\prime\}$ of the point $g(a)$ we have that $\{a\}$ is open in $C$ and $g(\{a\})\subseteq\{(f(b))^\prime\}$. If $a\in C\cap X$. Let $W$ be any open set in $Y$ such that $g(a)=f(a)\in W$. Consider $H=(W\cup(W^\prime\setminus\{(f(a))^\prime\}))\cap g(C)$ which is a basic open neighborhood of $f(a)$ in $g(C)$. Since $f_{|_{C\cap X}}:C\cap X\longrightarrow f(C\cap X)$ is a homeomorphism, then there exists an open set $U$ in $X$ with $a\in U$ and $f_{|_{C\cap X}}(U\cap C)\subseteq W\cap f(C\cap X)$. Now, $(U\cup(U^\prime\setminus\{a^\prime\}))\cap C=G$ is open in $C$ such that $a\in G$ and $g_{|_C}(G)\subseteq H$. Therefore, $g_{|_C}$ is continuous. Now, we show that $g_{|_C}$ is open. Let $K\cup(K^\prime\setminus\{k^\prime\})$, where $k\in K$ and $K$ is open in $X$, be any basic open set in $A(X)$, then $(K\cap C)\cup ((K^\prime\cap C)\setminus\{k^\prime\})$ is a basic open set in $C$. Since $X\cap C$ is countable in $X$, then $g_{|_C}(K\cap(X\cap C))=f_{|_{X\cap C}}(K\cap (X\cap C))$ is open in $Y\cap f(C\cap X)$ as $f_{|_{X\cap C}}$ is a homeomorphism. Thus $K\cap C$ is open in $Y\cap f(X\cap C)$. Also, $g((K^\prime\cap C)\setminus \{k^\prime\})$ is open in $Y^\prime\cap g(C)$ being a set of isolated points. Thus $g_{|_C}$ is an open function. Therefore, $g_{|_C}$ is a homeomorphism.\end{proof}

\bigskip

{\bf Acknowledgment.} I would like to thank professor Arhangel'ski\u{i} for giving us these definitions.

\bigskip

\bigskip

\section*{}
\addcontentsline{toc}{section}{References}


\end{document}